\newtheorem{theorem}{Theorem}[section]{\bfseries}{\itshape}
\newtheorem{lemma}[theorem]{Lemma}{\bfseries}{\itshape}
\newtheorem{conjecture}[theorem]{Conjecture}{\bfseries}{\itshape}
\newtheorem{corollary}[theorem]{Corollary}{\bfseries}{\itshape}
\newtheorem{observation}[theorem]{Observation}{\bfseries}{\itshape}
\newtheorem{claim}{Claim}{\bfseries}{\itshape}
\newtheorem{case}{Case}{\bfseries}{\itshape}
\newtheorem{subcase}{Case}[case]
\newtheorem{subsubcase}{Case}[subcase]
\title{\textbf{Spanning caterpillar in biconvex bipartite graphs}}
\author[1]{\textbf{Dhanyamol Antony}}
\author[2]{\textbf{Anita Das}}
\author[1]{\textbf{Shirish Gosavi}}
\author[1]{\textbf{Dalu Jacob}}
\author[1]{\textbf{Shashanka Kulamarva}}
\affil[1]{Department of Computer Science and Automation, Indian Institute of Science, Bengaluru, India\protect\\ Email: \texttt{\{dhanyamola, shirishgp, dalujacob, shashankak\}@iisc.ac.in}}
\affil[2]{Department of Mathematics, Manipal Institute of Technology Bengaluru, Manipal Academy of Higher Education, Manipal, India\protect\\ Email: \texttt{anita.das@manipal.edu}}
\date{}
\begin{document}

\maketitle

\begin{abstract}
    \noindent A bipartite graph $G=(A, B, E)$ is said to be a biconvex bipartite graph if there exist orderings $<_A$ in $A$ and $<_B$ in $B$ such that the neighbors of every vertex in $A$ are consecutive with respect to $<_B$ and the neighbors of every vertex in $B$ are consecutive with respect to $<_A$. A \emph{caterpillar} is a tree that will result in a path upon deletion of all the leaves. In this note, we prove that \emph{there exists a spanning caterpillar in any connected biconvex bipartite graph}. Besides being interesting on its own, this structural result has other consequences. For instance, this directly resolves the \emph{burning number conjecture} for biconvex bipartite graphs.\\
    
    \noindent Keywords: \textit{Biconvex bipartite graphs; Caterpillar; Bipartite permutation graphs; Chain graphs; Graph burning; Burning number}\\
    
    \noindent Mathematics Subject Classification: 05C75, 05C05
\end{abstract}

\section{Introduction}
We consider finite, simple, and undirected graphs throughout the paper. Let $G=(V,E)$ be a graph with the set of vertices $V$ and the set of edges $E$. For basic graph theoretic notations and definitions, we refer to \cite{West2001IGT}.

A graph is \emph{connected} if, for any pair of vertices, there exists a path between them. A graph is \emph{acyclic} if it does not contain a cycle. A \emph{tree} is a connected acyclic graph. A \emph{star} on $n$ vertices, denoted as $K_{1,n-1}$, is a tree with exactly $n-1$ leaves. A \emph{caterpillar} is a tree that will result in a path upon deletion of all the leaves. Henceforth, we refer to this path as \emph{residual path}. In other words, a \emph{caterpillar} is a tree having a residual path $P$ such that every vertex that is not in $P$ is adjacent to a vertex in $P$. A graph is said to be a \emph{permutation graph} if its vertices correspond to the elements of a permutation, and its edges denote the pairs of elements that are reversed by this permutation. A subgraph $H$ of a graph $G$ is said to be a \emph{spanning subgraph} if $V(H) = V(G)$. The following lemma on trees was given by \citet{Driscoll2017TreesSixCordial}.

\begin{lemma}[\cite{Driscoll2017TreesSixCordial}]\label{lem:SmallTreesAreCaterpillars}
    Every tree on at most six vertices is a caterpillar.
\end{lemma}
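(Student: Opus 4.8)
The plan is to argue via a longest path. Let $T$ be a tree with $|V(T)|\le 6$; if $|V(T)|\le 2$ then $T$ is a path and we are done, so assume otherwise and fix a longest path $P=v_0v_1\cdots v_k$ in $T$, so $k\ge 2$. As usual, both endpoints $v_0$ and $v_k$ are leaves of $T$, since otherwise $P$ could be extended.

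The first step is to reduce the statement to a claim about the vertices off $P$: if every vertex of $V(T)\setminus V(P)$ has a neighbor on $P$, then $T$ is a caterpillar, with residual path $v_1\cdots v_{k-1}$. To see this, let $w\notin V(P)$; its neighbor on $P$ is not $v_0$ or $v_k$ (those are leaves), so it is some internal $v_i$. A short cycle count shows $w$ must then be a leaf of $T$: a second neighbor of $w$, whether another vertex of $P$ or (using the hypothesis) an off-path vertex that again has a neighbor on $P$, would together with a subpath of $P$ close a cycle in the tree. Hence deleting all leaves of $T$ removes exactly $v_0$, $v_k$, and every off-path vertex, and the induced subgraph on the remaining set $\{v_1,\dots,v_{k-1}\}$ is a connected forest containing the spanning path $v_1\cdots v_{k-1}$, hence equals that path. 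So $T$ is a caterpillar.

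Now suppose $T$ is not a caterpillar. By the previous step there is a vertex $w\notin V(P)$ with no neighbor on $P$. Follow the unique path of $T$ from $w$ to $V(P)$, writing it as $w=u_0,u_1,\dots,u_m,v_i$ with $u_0,\dots,u_m\notin V(P)$; since $w$ has no neighbor on $P$ we have $m\ge 1$. Applying maximality of $P$ to the paths $u_0u_1\cdots u_m v_i v_{i-1}\cdots v_0$ and $u_0u_1\cdots u_m v_i v_{i+1}\cdots v_k$ gives $m+1+i\le k$ and $m+1+(k-i)\le k$, i.e. $m+1\le i\le k-1-m$. With $m\ge 1$ this forces $2\le i\le k-2$, hence $k\ge 4$, so $P$ has at least $5$ vertices; together with the distinct off-path vertices $u_0$ and $u_1$ this yields $|V(T)|\ge 7$, contradicting $|V(T)|\le 6$. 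Therefore $T$ is a caterpillar.

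I expect the only mildly delicate point to be the "off-path vertices must be leaves" claim, i.e. ruling out a second neighbor of such a vertex cleanly without circularity; everything else is routine bookkeeping on path lengths. (An alternative, if one prefers to quote a known fact, is the characterization that a tree is a caterpillar iff it has no subgraph isomorphic to the spider obtained from $K_{1,3}$ by subdividing each edge once — a tree on $7$ vertices — from which the lemma is immediate; but the self-contained longest-path argument above avoids invoking it.)
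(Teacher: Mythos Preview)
Your argument is correct. The longest-path reduction is clean: the leaf claim for off-path vertices goes through exactly as you sketch (both cases close a cycle in the tree), and the counting in the final paragraph is sharp, recovering precisely the $7$-vertex obstruction.

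Note, however, that the paper does not actually prove this lemma: it is quoted from \cite{Driscoll2017TreesSixCordial} and used as a black box in the base case $n\le 6$ of Theorem~\ref{thm:BiconBipCaterpillar}. So there is no ``paper's own proof'' to compare against. Your self-contained longest-path argument is a perfectly good substitute, and in fact it transparently yields the stronger statement that the unique minimal non-caterpillar tree is the $7$-vertex spider (the subdivided $K_{1,3}$), which is exactly the forbidden-subgraph characterization you mention as an alternative. Either route is fine here; yours has the virtue of not invoking an external classification.
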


A graph $G=(V,E)$ is said to be \emph{bipartite} if the vertex set $V$ can be partitioned into two partite sets $A$ and $B$ such that every edge in $E$ has its one end-point in $A$ and the other end-point in $B$. A bipartite graph $G=(A,B,E)$ is said to be a \emph{convex bipartite} graph if there exists an ordering of vertices, say, $<_B$ of $B$ such that the neighbors of every vertex in $A$ are consecutive with respect to $<_B$. A bipartite graph $G=(A,B,E)$ is said to be a \emph{biconvex bipartite graph} if there exist orderings $<_A$ of $A$ and $<_B$ of $B$ such that the neighbors of every vertex in $A$ are consecutive with respect to $<_B$ and the neighbors of every vertex in $B$ are consecutive with respect to $<_A$.

For any two distinct vertices $a_i, a_j \in A$, by $a_i <_A a_j$, we mean $a_i$ appears before $a_j$ in the ordering $<_A$. Similarly, for any two distinct vertices $b_i, b_j \in B$, by $b_i <_B b_j$, we mean $b_i$ appears before $b_j$ in the ordering $<_B$. The set of all neighbors of a vertex $v$ in $G$ is denoted as $N_G(v)$ or simply $N(v)$. A bipartite graph $G=(A,B,E)$ is said to be a \emph{chain graph} if there exist orderings $<_A$ of $A$ and $<_B$ of $B$ such that $N(u) \subseteq N(v)$ whenever $u <_A v$ and $N(a) \supseteq N(b)$ whenever $a <_B b$.

By a result of \citet{Spinrad1987BipPerm}, the class of bipartite permutation graphs is a subclass of the class of biconvex bipartite graphs. Further, it is easy to see that the class of chain graphs is a subclass of the class of biconvex bipartite graphs. The following observation follows from the definition of a biconvex bipartite graph.

\begin{observation}\label{obs:BiconvexConsecutiveProperty}
    Let $G=(A,B,E)$ be a biconvex bipartite graph, $x,y \in A$ (respectively, $x,y \in B$), and $z \in N(x) \cap N(y)$. If $w \in A$ (respectively, $w \in B$) such that $x <_A w <_A y$ (respectively, $x <_B w <_B y$), then by consecutive property of the neighbors of $z$, we have that $z$ is adjacent to $w$, i.e., $z \in N(w)$.
\end{observation}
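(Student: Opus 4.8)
The plan is to derive the conclusion as an immediate unwinding of the defining consecutiveness property of biconvex bipartite graphs, applied to the common neighbor $z$. The only genuine bookkeeping is to first decide which partite set $z$ belongs to, so that the correct half of the biconvex condition can be invoked.

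First I would treat the primary case $x,y\in A$. Since $z$ is adjacent to $x$ and $y$, both of which lie in $A$, and $G$ is bipartite with parts $A$ and $B$, we must have $z\in B$. By the definition of a biconvex bipartite graph, the neighbors of every vertex in $B$ are consecutive with respect to $<_A$; in particular $N(z)\subseteq A$ is a consecutive block (an interval) under $<_A$. The hypothesis $z\in N(x)\cap N(y)$ means precisely that $x,y\in N(z)$, so both $x$ and $y$ belong to this interval. Given $x<_A w<_A y$, the vertex $w$ lies strictly between two members of a consecutive set, and consecutiveness (the absence of gaps) forces $w\in N(z)$, that is, $z\in N(w)$, which is exactly the desired conclusion.

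For the parenthetical symmetric case $x,y\in B$, the same reasoning applies verbatim with the roles of $A$ and $B$ interchanged: adjacency of $z$ to $x,y\in B$ forces $z\in A$, the biconvex definition now guarantees that $N(z)\subseteq B$ is consecutive with respect to $<_B$, and the betweenness $x<_B w<_B y$ together with $x,y\in N(z)$ yields $w\in N(z)$ by the same interval argument.

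I do not expect any real obstacle here, since the statement is essentially a restatement of the definition. The two points that require a moment's care are (i) correctly identifying the side of $z$ so as to apply the matching half of the biconvex condition, namely that neighbors of $B$-vertices are consecutive in $<_A$ while neighbors of $A$-vertices are consecutive in $<_B$, and (ii) reading \emph{consecutive} as \emph{forming an interval in the order}, so that membership of $x$ and $y$ together with betweenness of $w$ immediately yields membership of $w$.
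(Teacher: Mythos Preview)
Your argument is correct and matches the paper's approach: the observation is stated there as an immediate consequence of the definition of biconvexity, and your proof is simply a careful unwinding of that definition, including the bookkeeping of locating $z$ in the opposite partite set. There is nothing to add or correct.
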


Let $G=(A,B,E)$ be a biconvex bipartite graph. Two edges $a_ib_s$ and $a_jb_r$ in $G$ are said to be \emph{cross edges} if either $a_i <_A a_j$ and $b_r <_B b_s$, or $a_j <_A a_i$ and $b_s <_B b_r$. A biconvex ordering of $G$ (pair $<_A$ and $<_B$) is said to be a \emph{straight ordering} (in short, an $S$-ordering) if, whenever a pair of cross edges $a_ib_s$ and $a_jb_r$ exist in $G$, then at least one of the edges $a_ib_r$ and $a_jb_s$ also exists. A biconvex ordering of $G$ that is also an $S$-ordering is said to be a \emph{biconvex $S$-ordering} of $G$. \citet{Abbas2000Biconvex} studied the structure of biconvex bipartite graphs and gave the following theorem.

\begin{theorem}[\cite{Abbas2000Biconvex}]\label{thm:BiconvexSOrdering}
    Every connected biconvex bipartite graph has a biconvex $S$-ordering.
\end{theorem}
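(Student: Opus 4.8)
The plan is to start from an arbitrary biconvex ordering $(<_A,<_B)$ of $G$, which exists by definition, and to repair it into an $S$-ordering by a finite sequence of local surgeries, each preserving biconvexity and strictly decreasing a fixed non-negative integer potential. For the potential I take $\Phi$, the number of unordered pairs of cross edges $\{a_ib_s,\,a_jb_r\}$ (with $a_i<_A a_j$ and $b_r<_B b_s$) that \emph{violate} the $S$-condition, i.e. for which neither $a_ib_r$ nor $a_jb_s$ lies in $E$; call these \emph{bad pairs}. If $\Phi=0$ the ordering is already a biconvex $S$-ordering and we are done, so suppose a bad pair $\{a_ib_s,a_jb_r\}$ exists; everything reduces to exhibiting a biconvexity-preserving move that strictly lowers $\Phi$.

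The first step is to read off the rigid structure a bad pair forces. Since $N(a_i)$ is an interval of $<_B$ containing $b_s$ but not $b_r$ and $b_r<_B b_s$, every neighbour of $a_i$ lies strictly after $b_r$; symmetrically every neighbour of $a_j$ lies strictly before $b_s$. Thus the block of $A$ strictly between $a_i$ and $a_j$ genuinely "separates" $b_r$ from $b_s$. Among all bad pairs I would choose one that is extremal — say with the fewest $B$-vertices strictly between $b_r$ and $b_s$, and subject to that the fewest $A$-vertices strictly between $a_i$ and $a_j$ — and argue that for such a minimal witness the block $[a_i,a_j]$ of $<_A$ is \emph{clean}, meaning no $b\in B$ has a neighbourhood that meets both $[a_i,a_j]$ and its complement on the same side. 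Reversing a clean block in $<_A$ (leaving $<_B$ untouched) keeps every $N(b)$ consecutive in the new $<_A$, keeps every $N(a)$ consecutive in $<_B$ trivially, turns the chosen bad pair into a non-crossing pair, and — this must be checked — creates no new bad pair; hence $\Phi$ strictly decreases and, $\Phi$ being a non-negative integer, the process terminates at a biconvex $S$-ordering.

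The main obstacle is exactly the cleanness claim together with the no-new-bad-pair bookkeeping: a block reversal inside $<_A$ destroys biconvexity as soon as some $N(b)$ straddles an endpoint of $[a_i,a_j]$, since its set of positions would split into two runs, and one must rule this out — or repair it by an additional bounded adjustment of $<_B$ — using only the interval structure of all neighbourhoods and Observation~\ref{obs:BiconvexConsecutiveProperty}. I expect the proof to branch on where a hypothetical straddling $b$ sits relative to $b_r,b_s$ and on whether $a_i,a_j\in N(b)$, each branch either contradicting minimality of the witness or producing a strictly smaller bad pair; getting the extremal choice and the potential refined enough that every branch closes is the delicate part. A cleaner-looking but more technical alternative is induction on $|A|+|B|$: delete a carefully chosen extreme vertex (a natural candidate is the $<_A$-last vertex of $A$ or the $<_B$-last vertex of $B$), apply the induction hypothesis to the connected components of what remains (again biconvex, by restricting the orderings), splice their $S$-orderings together in the order in which the deleted vertex's neighbourhood meets them, and reinsert the deleted vertex at the corresponding end; there the technical heart becomes showing that an extreme vertex can always be chosen whose reinsertion simultaneously respects consecutiveness and the $S$-condition, which once more reduces to a close analysis of the first and last vertices of a biconvex ordering.
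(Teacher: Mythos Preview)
The paper does not contain a proof of this theorem: it is quoted as a result of Abbas and Stewart~\cite{Abbas2000Biconvex} and used as a black box. So there is nothing here to compare your proposal against.

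As for the proposal itself, what you have written is a plan rather than a proof, and you already identify the gap yourself. The block-reversal scheme hinges entirely on the \emph{cleanness} claim --- that for a suitably extremal bad pair the interval $[a_i,a_j]$ can be reversed without any $N(b)$ straddling an endpoint --- and you neither prove this nor give a fallback when it fails. Your sentence ``getting the extremal choice and the potential refined enough that every branch closes is the delicate part'' is an honest admission that the argument is not complete. The same is true of the inductive alternative: you name the candidate vertex to delete and the intended reinsertion, but the claim that reinsertion simultaneously preserves consecutiveness and the $S$-condition is exactly the heart of the matter, and it is asserted, not argued. If you want a self-contained proof rather than a citation, you will need to actually carry out one of these case analyses (or consult the original Abbas--Stewart paper, whose argument is of the second, inductive, flavour).
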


A path between a pair of vertices $u$ and $v$ in $G$ is said to be a \emph{straight path} (in short, an $S$-path) if it does not contain any cross edges. \citet{Abbas2000Biconvex} also proved the following theorem on the existence of $S$-paths.

\begin{theorem}[\cite{Abbas2000Biconvex}]\label{thm:ShortestSPath}
    Let $G$ be a connected biconvex bipartite graph with a biconvex $S$-ordering and let $u$ and $v$ be any two vertices in $G$. Then there exists a shortest $(u,v)$-path in $G$ that is an $S$-path.
\end{theorem}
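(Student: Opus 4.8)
The plan is to fix the biconvex $S$-ordering of $G$ supplied by Theorem~\ref{thm:BiconvexSOrdering}, take an arbitrary shortest $(u,v)$-path, and straighten it by an exchange argument that removes crossings one at a time while preserving both endpoints and the length. Concretely, I would choose, among all shortest $(u,v)$-paths, one $P=v_0v_1\cdots v_\ell$ with $v_0=u$ and $v_\ell=v$ that minimises the number of crossing pairs of edges, and then argue that this minimum must be $0$.

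The first and cleanest step is a reduction showing that on a shortest path crossings can only be \emph{local}. Suppose $e=v_pv_{p+1}$ and $f=v_qv_{q+1}$ with $p<q$ form a crossing pair; since crossing edges are vertex-disjoint we have $q-p\ge 2$. Applying the $S$-ordering to $e,f$ produces one of the two ``uncrossing'' edges, each of which is a chord of $P$ joining two of its vertices. A short parity computation on the indices shows that these chords join vertices at path-distance $q-p$ (when $q-p$ is odd) or at path-distances $q-p\pm 1$ (when $q-p$ is even). Because every subpath of a shortest path is itself shortest, a chord joining two vertices at path-distance at least $3$ would be an impossible shortcut; hence the only way the $S$-ordering requirement can be met is $q-p=2$, in which case the surviving uncrossing edge is exactly the middle edge $v_{p+1}v_{p+2}$. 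Thus every crossing on $P$ is between two edges at path-distance exactly two, of the form $v_iv_{i+1}\times v_{i+2}v_{i+3}$.

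It then remains to remove such a distance-two crossing. Writing the four vertices as $w_0,w_1,w_2,w_3$ with $w_0,w_2\in A$ and $w_1,w_3\in B$ (the other parity being symmetric) and, say, $w_0<_Aw_2$ and $w_3<_Bw_1$, the shortest-path condition forces $w_0w_3\notin E$, so by Observation~\ref{obs:BiconvexConsecutiveProperty} the neighbourhoods $N(w_0)$ and $N(w_3)$ are intervals lying strictly to one side of $w_3$ and of $w_0$ respectively. The idea is to reroute the two internal vertices $w_1,w_2$, using the consecutive-neighbour property together with the straightness of the fixed ordering, so as to obtain a length-three $(w_0,w_3)$-subpath whose first and last edges no longer cross; splicing this in yields a shortest $(u,v)$-path with strictly fewer crossings, contradicting the choice of $P$.

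I expect this last step to be the main obstacle, and it is genuinely where the strength of the ordering is used: the bare $S$-ordering axiom alone does \emph{not} suffice to undo a distance-two crossing (one can already exhibit a biconvex $S$-ordering of a single $P_4$ in which the unique shortest path between two vertices is forced to cross), so the argument must exploit the particular straightening properties of the biconvex $S$-ordering constructed in Theorem~\ref{thm:BiconvexSOrdering} rather than the defining inequality in isolation. Once the distance-two case is settled, the exchange process terminates and the resulting crossing-free shortest path is the desired $S$-path.
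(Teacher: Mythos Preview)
The paper does not give its own proof of this theorem: it is quoted verbatim from \cite{Abbas2000Biconvex} and used as a black box. So there is no in-paper argument to compare your proposal against.

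On the merits, your reduction to ``distance-two'' crossings is clean and correct: on a shortest path, the $S$-ordering axiom applied to a crossing pair $v_pv_{p+1},\,v_qv_{q+1}$ produces an edge joining two path vertices at path-distance $q-p$ (odd case) or $q-p\pm1$ (even case), and any such edge with path-distance $\ge 2$ is a forbidden shortcut; the only way out is $q-p=2$, where the guaranteed uncrossing edge is just the middle path edge $v_{p+1}v_{p+2}$ and no information is gained. That is exactly why your final step stalls, and you say so yourself.

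The gap is real and, as your $P_4$ example shows, unclosable at the level of generality you are working in. With $A=\{a_1,a_2\}$, $B=\{b_1,b_2\}$, edges $a_1b_1,\,a_2b_1,\,a_2b_2$, and orders $a_1<_Aa_2$, $b_2<_Bb_1$, the ordering is biconvex and satisfies the $S$-ordering axiom (the cross pair $a_1b_1,\,a_2b_2$ is resolved by the existing edge $a_2b_1$), yet the \emph{unique} $(a_1,b_2)$-path is not an $S$-path. Hence the theorem, read literally for an arbitrary biconvex $S$-ordering, is false; the result in \cite{Abbas2000Biconvex} must be about the specific ordering they construct, and any proof has to invoke properties of that construction. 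Your proposal explicitly defers this (``the argument must exploit the particular straightening properties of the biconvex $S$-ordering constructed in Theorem~\ref{thm:BiconvexSOrdering}'') without supplying those properties or carrying out the rerouting, so as written it is a plan with the decisive step missing rather than a proof.
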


An \emph{asteroidal triple} in a graph $G$ is an independent set (a set of vertices with no edge between any pair) of three vertices in $G$ if every two of them have a path between them avoiding the neighborhood of the third one. If there is no asteroidal triple in a graph, then the graph is said to be \emph{asteroidal triple-free} (\emph{AT-free}, for short). By a result of \citet{Corneil1997ATFree}, we have that there exists a spanning caterpillar in every AT-free graph. One can see that biconvex bipartite graphs can have an asteroidal triple, i.e., biconvex bipartite graphs need not be AT-free. Here, we present a similar structural existential result on connected biconvex bipartite graphs. In particular, we prove the following theorem.

\begin{theorem}\label{thm:BiconBipCaterpillar}
    If $G$ is a connected biconvex bipartite graph, then there exists a spanning caterpillar in $G$.
\end{theorem}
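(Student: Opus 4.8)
The plan is to reduce Theorem~\ref{thm:BiconBipCaterpillar} to the existence of a \emph{dominating path} in $G$ (a path $P$, viewed as a subgraph, such that every vertex of $G$ lies on $P$ or has a neighbour on $P$), and then to produce such a path from a shortest straight path between two extreme vertices of a biconvex $S$-ordering. The reduction is immediate: given a dominating path $P$, keep all edges of $P$ and, for each vertex $v \notin V(P)$, add exactly one edge joining $v$ to a neighbour of $v$ on $P$; the resulting subgraph $T$ is spanning, connected, and has $|V(G)|-1$ edges, hence is a spanning tree, and since every vertex outside $V(P)$ is a leaf of $T$ attached to the path $P$, deleting the leaves of $T$ leaves a subpath of $P$. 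Thus $T$ is a spanning caterpillar. (If $|V(G)| \le 6$ we are done by Lemma~\ref{lem:SmallTreesAreCaterpillars} applied to any spanning tree, so a structural argument may assume $|V(G)| \ge 7$ when needed.) So from now on the goal is only to build a dominating path.

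Fix, via Theorem~\ref{thm:BiconvexSOrdering}, a biconvex $S$-ordering $<_A, <_B$ of $G$ for which the conclusion of Theorem~\ref{thm:ShortestSPath} holds, and write $A = \{a_1 <_A \cdots <_A a_p\}$, $B = \{b_1 <_B \cdots <_B b_q\}$. I would first record two easy facts: (i) since $G$ is connected, any two $<_A$-consecutive vertices of $A$, and any two $<_B$-consecutive vertices of $B$, have a common neighbour (otherwise the neighbourhoods on the two sides of the ``gap'' would be separated and $G$ disconnected); and (ii) Observation~\ref{obs:BiconvexConsecutiveProperty} itself: a common neighbour of two vertices on one side is adjacent to everything between them. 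Next, let $P$ be a shortest $(b_1, b_q)$-path that is an $S$-path, which exists by Theorem~\ref{thm:ShortestSPath}, and write $P = b_1 = \beta_0, \alpha_1, \beta_1, \alpha_2, \dots, \alpha_m, \beta_m = b_q$ with $\alpha_i \in A$, $\beta_i \in B$.

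The first structural step is to show that $P$ is a \emph{monotone staircase}: $\alpha_1 <_A \cdots <_A \alpha_m$ and $\beta_0 <_B \cdots <_B \beta_m$. This follows by combining minimality with fact (ii) — any ``backtracking'' along $<_A$ or along $<_B$ would, via a common neighbour, produce a chord of $P$ and hence a shorter $(b_1, b_q)$-path — together with the $S$-property, which forces the two monotonicities to point the same way. From monotonicity and fact (ii) the key domination properties are then almost automatic. For $<_B$-consecutive $\beta_{j-1}, \beta_j$ on $P$, the vertex $\alpha_j$ is adjacent to both and hence to every $B$-vertex between them; since $\beta_0 = b_1$ and $\beta_m = b_q$ are the extremes of $B$, \emph{every vertex of $B$ is dominated by $P$}. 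Symmetrically, for $<_A$-consecutive $\alpha_j, \alpha_{j+1}$ on $P$, the vertex $\beta_j$ is adjacent to every $A$-vertex between them, so \emph{every $a \in A$ with $\alpha_1 \le_A a \le_A \alpha_m$ is dominated by $P$}. Consequently the only vertices that $P$ can fail to dominate are the $A$-vertices lying strictly to the left of $\alpha_1$ or strictly to the right of $\alpha_m$.

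The remaining — and I expect genuinely hardest — step is to deal with these leftover extreme $A$-vertices. If $a <_A \alpha_1$ is undominated, then $N(a)$ avoids every $\beta_j$, so $N(a)$ is contained in a single gap $(\beta_{j-1}, \beta_j)$ of the $B$-sequence of $P$, and two applications of fact (ii) show that both $\alpha_j$ and $\alpha_1$ are adjacent to all of $N(a)$. The idea is to \emph{reroute the left end of $P$}, replacing the prefix $b_1, \alpha_1$ by $a, b', \alpha_1$ for a suitable $b' \in N(a)$ (and symmetrically at the right end): one checks that the new path is again a monotone-staircase $S$-path between its endpoints, dominates everything the old one did, and in addition dominates $a$ while enlarging the dominated $A$-interval. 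Taking at the outset a monotone-staircase $S$-path that also dominates the maximum possible number of vertices, any leftover undominated extreme $A$-vertex would then yield, via such a rerouting, a strictly better path — a contradiction. The delicate points that must be worked out carefully are: showing that a legitimate rerouting is always available; excluding the degenerate configurations in which the natural reroute would reuse a vertex or violate monotonicity (here the specific biconvex $S$-ordering furnished by Theorems~\ref{thm:BiconvexSOrdering} and \ref{thm:ShortestSPath}, and Lemma~\ref{lem:SmallTreesAreCaterpillars} for small configurations, do the work); and handling the case in which the trapping gap $(\beta_{j-1}, \beta_j)$ is the first or last gap of $P$, where the sandwich arguments degenerate. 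Once a dominating path is in hand, the caterpillar is assembled exactly as in the reduction above.
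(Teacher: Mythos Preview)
Your overall plan matches the paper's: fix a biconvex $S$-ordering, take a shortest $S$-path $P$ from $b_1$ to $b_q$, observe it is monotone, conclude that $P$ dominates all of $B$ and the $A$-interval $[\alpha_1,\alpha_m]$, and then repair the two ``tails'' of $A$. The divergence is in that last repair step, and there your argument has a genuine gap.

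You assert that the rerouted path $a,\,b',\,\alpha_1,\,\beta_1,\dots,\beta_m$ is ``again a monotone-staircase $S$-path.'' This is false in general. Take $A=\{a_1,a_2,a_3\}$, $B=\{b_1,b_2,b_3,b_4\}$ with $N(a_1)=\{b_3\}$, $N(a_2)=\{b_1,b_2,b_3\}$, $N(a_3)=\{b_2,b_3,b_4\}$; this is a biconvex $S$-ordering. One shortest $S$-path from $b_1$ to $b_4$ is $b_1,a_2,b_2,a_3,b_4$, which leaves $a_1$ undominated, and here $N(a_1)=\{b_3\}$ sits in the gap $(\beta_1,\beta_2)=(b_2,b_4)$, i.e.\ $j=2$. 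Your reroute yields $a_1,b_3,a_2,b_2,a_3,b_4$, whose $B$-sequence $b_3,b_2,b_4$ is not monotone. Since your extremal argument maximises over monotone-staircase $S$-paths, the rerouted path is not in the class and no contradiction follows. (It \emph{does} dominate strictly more---so a direct two-step construction, rerouting once on each end of the fixed shortest $S$-path and then checking that the two modifications do not interfere, would work; but that is not the argument you wrote, and the non-interference verification is exactly the content you have not supplied.)

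The paper handles the tails differently and more cleanly. It first \emph{extends} $P$ by appending $a_f$ (the first neighbour of $b_1$) and $a_l$ (the last neighbour of $b_{n_2}$) at the two ends, so that the dominated $A$-interval is already $[a_f,a_l]$. For the remaining vertices $A_1=\{a: a_l<_A a\}$ it does an \emph{internal vertex replacement}: take $\tilde b$ the last neighbour of $a_{n_1}$, locate the gap $b'_i<_B\tilde b<_B b'_{i+1}$, and replace $b'_{i+1}$ by $\tilde b$ in $P$. Because $\tilde b$ is adjacent to $a'_i$ (gap argument) and to $a_{n_1}$, convexity makes $\tilde b$ adjacent to the whole interval $[a'_i,a_{n_1}]$, so the swap both preserves the path and absorbs all of $A_1$. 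The $A$-vertices on the path are untouched, so $B$-domination is unaffected. The symmetric replacement handles $A_0$, and a short $S$-ordering argument (using $N(a_1)\cap N(a_{n_1})=\emptyset$) shows the two replacements occur at distinct positions and do not collide. That non-interference check is precisely the ``delicate point'' you flagged but did not resolve.
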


Note that a similar statement of Theorem~\ref{thm:BiconBipCaterpillar} may not hold for convex bipartite graphs that are not biconvex. For instance, Figure~\ref{fig:NoSpanCaterpillarExample} illustrates an example of a connected convex bipartite graph that does not have a spanning caterpillar. One can verify that the graph in Figure~\ref{fig:NoSpanCaterpillarExample} is not a biconvex bipartite graph.

\begin{figure}[h]
    \centering
    \begin{tikzpicture}
    \begin{scope}[every node/.style={circle,draw,inner sep=0pt, minimum size=4ex}]
        \node (a1) at (0,3) {$a_1$};
        \node (a2) at (0,2) {$a_2$};
        \node (a3) at (0,1) {$a_3$};
        \node (a4) at (0,0) {$a_4$};
        \node (b1) at (3,3) {$b_1$};
        \node (b2) at (3,2) {$b_2$};
        \node (b3) at (3,1) {$b_3$};
    \end{scope}
    \draw (a1) to (b1);
    \draw (a1) to (b2);
    \draw (a1) to (b3);
    \draw (a2) to (b1);
    \draw (a3) to (b2);
    \draw (a4) to (b3);
    \end{tikzpicture}
    \medskip
    \caption{A convex bipartite graph having no spanning caterpillar}
    \label{fig:NoSpanCaterpillarExample}
\end{figure}
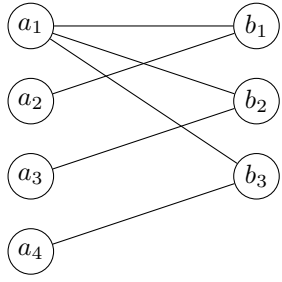

\section{Proof of Theorem~\ref{thm:BiconBipCaterpillar}}
This section is all about proving our main result, i.e., Theorem~\ref{thm:BiconBipCaterpillar}.

\begin{proof}
    Let $G$ be the given biconvex bipartite graph of order $n$, with $A$ and $B$ as the partite sets of cardinalities $n_1$ and $n_2$. Let $<_A = (a_1, a_2, \dots , a_{n_1})$ and $<_B = (b_1, b_2, \dots , b_{n_2})$ be the biconvex $S$-ordering of $G$ (existence of such a pair $<_A$ and $<_B$ follows from Theorem~\ref{thm:BiconvexSOrdering}). Suppose $n \le 6$. Since graph $G$ is connected, it has a spanning tree $T$. By Lemma~\ref{lem:SmallTreesAreCaterpillars}, we have that $T$ is a caterpillar, as desired. Thus we can assume that $n \ge 7$. Depending on the cardinalities of $n_1$ and $n_2$, we have the following cases.
    
    \begin{case}
        $n_1 = 1$ or $n_2 = 1$.
    \end{case}
    
    Without loss of generality, say $n_1 = 1$, i.e., $A=\{a_1\}$. Then since $n \ge 7$, we have $n_2 \ge 6$. Since $G$ is connected, $a_1$ is adjacent to every vertex in $B$. Since $G$ is bipartite, there is no edge between two vertices in $B$, implying that $G$ is a star, i.e., $G = K_{1,n_2}$. Hence, if we delete all the leaves in $G$ (which itself is a tree), we get $P_1$, i.e., a path on one vertex, which implies that $G$ is a caterpillar, as desired.

    \begin{case}
        $n_1 \ge 2$ and $n_2 \ge 2$
    \end{case}

    Let $a_f$ be the first neighbor of $b_1$ and let $a_l$ be the last neighbor of $b_{n_2}$ with respect to $<_A$. Note that as $n_1 \ge 2$ and $n_2 \ge 2$, we have $a_1 \neq a_{n_1}$ and $b_1 \neq b_{n_2}$. Depending on $N(a_1) \cap N(a_{n_1})$ and $N(b_1) \cap N(b_{n_2})$, we have the following cases.

    \begin{subcase}
        $N(a_1) \cap N(a_{n_1}) \neq \emptyset$ or $N(b_1) \cap N(b_{n_2}) \neq \emptyset$.
    \end{subcase}
    
    Without loss of generality, let $N(a_1) \cap N(a_{n_1}) \neq \emptyset$. Let $b_c$ be a common neighbor of $a_1$ and $a_{n_1}$. Then by Observation~\ref{obs:BiconvexConsecutiveProperty}, we have that every vertex in $A$ is adjacent to $b_c$. Now, depending on $N(b_1) \cap N(b_{n_2})$, we have the following cases.
    
    \begin{subsubcase}
        $N(b_1) \cap N(b_{n_2}) \neq \emptyset$.
    \end{subsubcase}
    
    Let $a_c$ be a common neighbor of $b_1$ and $b_{n_2}$. Then by Observation~\ref{obs:BiconvexConsecutiveProperty}, we have that every vertex in $B$ is adjacent to $a_c$. Now, we consider $P = a_cb_c$. Then every vertex of $G$ that is not in $P$ is adjacent to either $a_c$ or $b_c$ implying that $G$ has a spanning caterpillar with $P$ being the residual path.
    
    \begin{subsubcase}
        $N(b_1) \cap N(b_{n_2}) = \emptyset$.
    \end{subsubcase}
    
    Since $b_c$ is adjacent to every vertex in $A$, the fact that $N(b_1) \cap N(b_{n_2}) = \emptyset$ will imply that $b_1$, $b_c$ and $b_{n_2}$ are three distinct vertices in $B$ and $a_f \neq a_l$. Now, we consider $P = b_1a_fb_ca_lb_{n_2}$. Let $v$ be a vertex of $G$ that is not in $P$. If $v \in A$, then $v$ is adjacent to $b_c$. Otherwise, $v \in B$ and by Observation~\ref{obs:BiconvexConsecutiveProperty}, $v$ is adjacent to $a_f$ or $a_l$ implying that $G$ has a spanning caterpillar with $P$ being the residual path.

    \begin{subcase}
        $N(a_1) \cap N(a_{n_1}) = \emptyset$ and $N(b_1) \cap N(b_{n_2}) = \emptyset$.
    \end{subcase}
    
    Let $Q$ be a shortest path from $b_1$ to $b_{n_2}$ in $G$ which is also an $S$-path (recall that an $S$-path is a path without cross edges). Since $G$ is connected, the existence of $Q$ follows from Theorem~\ref{thm:ShortestSPath}. Let $Q = b'_1a'_1b'_2 \dots b'_{k-1}a'_{k-1}b'_k$ for some positive integer $k$. Here we have $b'_1 = b_1$ and $b'_k = b_{n_2}$. Moreover, every vertex of the form $a'_i$ is in $A$ and every vertex of the form $b'_i$ is in $B$. Further, since $N(b_1) \cap N(b_{n_2}) = \emptyset$, we have $a'_1 \neq a'_{k-1}$, implying that $k \ge 3$. Since $Q$ is an $S$-path, we have:
    \begin{equation*}
        a'_1 <_A a'_2 <_A \dots <_A a'_k \text{\enskip and \enskip} b_1 = b'_1 <_B b'_2 <_B \dots <_B b'_k = b_{n_2}
    \end{equation*}
    Recall that $a_f$ is the first neighbor of $b_1$ and $a_l$ is the last neighbor of $b_{n_2}$. If $a_f <_A a'_1$, then add the edge $a_fb_1$ (same as $a_fb'_1$) to the path $Q$ and label $a'_0 = a_f$. Note that otherwise, we have $a_f = a'_1$. If $a'_{k-1} <_A a_l$, then add the edge $a_lb_{n_2}$ (same as $a_lb'_k$) to the path $Q$ and label $a'_k = a_l$. Note that otherwise, we have $a_l = a'_{k-1}$. Let this modified path be $P$. In other words, the path $P$ is defined as follows:
    \begin{equation*}
        P =
        \begin{aligned}
            \begin{cases}
                a'_0b'_1a'_1b'_2 \dots b'_{k-1}a'_{k-1}b'_ka'_k, \text{ if } a'_0 = a_f <_A a'_1 \text{ and } a'_{k-1} <_A a_l = a'_k \medskip \\
                a'_0b'_1a'_1b'_2 \dots b'_{k-1}a'_{k-1}b'_k, \text{ if } a'_0 = a_f <_A a'_1 \text{ and } a'_{k-1} = a_l \medskip \\
                b'_1a'_1b'_2 \dots b'_{k-1}a'_{k-1}b'_ka'_k, \text{ if } a_f = a'_1 \text{ and } a'_{k-1} <_A a_l = a'_k \medskip \\
                b'_1a'_1b'_2 \dots b'_{k-1}a'_{k-1}b'_k, \text{ otherwise}
            \end{cases}
        \end{aligned}
    \end{equation*}
    Let $b_r$ be a vertex in $B$ but not in $P$. Then $b_r$ is sandwiched between two vertices $b'_i$ and $b'_{i+1}$ for some $i$ with $1 \le i \le k-1$ (i.e., $b'_i <_B b_r <_B b'_{i+1}$) such that $b'_i, b'_{i+1} \in V(P)$. Since the vertex $a'_i$ in $P$ is adjacent to both $b'_i$ and $b'_{i+1}$, by Observation~\ref{obs:BiconvexConsecutiveProperty}, $a'_i$ is also adjacent to $b_r$. Since $b_r$ was chosen arbitrarily, we can infer that every vertex of $B$ that is not in $P$ is adjacent to a vertex in $P$. Let $a_s$ be a vertex in $A$ but not in $P$ such that $a_f <_A a_s <_A a_l$. Then $a_s$ is sandwiched between two vertices $a'_i$ and $a'_{i+1}$, for some $i$ with $0 \le i \le k-1$ (i.e., $a'_i <_A a_s <_A a'_{i+1}$) such that $a'_i, a'_{i+1} \in V(P)$. Since the vertex $b'_{i+1}$ in $P$ is adjacent to both $a'_i$ and $a'_{i+1}$, by Observation~\ref{obs:BiconvexConsecutiveProperty}, $b'_{i+1}$ is also adjacent to $a_s$. Since $a_s$ was chosen arbitrarily, we can infer that every vertex of $A$ that is not in $P$, appearing in between $a_f$ and $a_l$ is adjacent to a vertex of $P$.

    Let $A_0$ be the set of all vertices in $A$ that appear before $a_f$ in the ordering $<_A$. Let $A_1$ be the set of all vertices in $A$ that appear after $a_l$ in the ordering $<_A$. Note that every vertex in $G$ that is not in $A_0 \cup A_1$ is either in $P$ or is adjacent to a vertex in $P$. Hence, if $A_0 = A_1 = \emptyset$, then $P$ itself is the residual path leading to the caterpillar. Therefore, we can assume that $A_0 \neq \emptyset$ or $A_1 \neq \emptyset$. Further, we have the following cases.
    
    \begin{subsubcase}\label{case:OneAmongA0A1IsEmpty}
        Exactly one among $A_0$ and $A_1$ is non-empty.
    \end{subsubcase}
    
    Without loss of generality, assume that $A_1 \neq \emptyset$. Thus we have $A_0 = \emptyset$. Then clearly, $a_{n_1} \in A_1$ and is the last vertex in $A_1$. Suppose $a_{n_1}$ is adjacent to some vertex $b'_i$ in $P$. Since $b'_i$ is adjacent to both $a'_{i-1}$ and $a_{n_1}$, by Observation~\ref{obs:BiconvexConsecutiveProperty}, $b'_i$ is also adjacent to every vertex in between $a'_{i-1}$ and $a_{n_1}$. In particular, $b'_i$ is adjacent to every vertex in $A_1$. Therefore, since $A_0 = \emptyset$, we are sure that $P$ is the residual path which yields the required caterpillar. Hence, we can assume that $a_{n_1}$ is not adjacent to any vertex in $P$.


    
    Let $\tilde{b}$ be the last neighbor of $a_{n_1}$ (for the case $A_0 \neq \emptyset$ and $A_1 = \emptyset$, we consider $\tilde{b}$ to be the first neighbor of $a_1$). Since $a_{n_1}$ is not adjacent to any vertex of $P$, we have $\tilde{b} \notin V(P)$. In particular, $\tilde{b} \neq b_1$ and $\tilde{b} \neq b_{n_2}$. Therefore, $\tilde{b}$ is sandwiched between two vertices $b'_i$ and $b'_{i+1}$ for some $i$ with $1 \le i \le k-1$ (i.e., $b'_i <_B \tilde{b} <_B b'_{i+1}$) such that $b'_i, b'_{i+1} \in V(P)$. Since the vertex $a'_i$ in $P$ is adjacent to both $b'_i$ and $b'_{i+1}$, by Observation~\ref{obs:BiconvexConsecutiveProperty}, $a'_i$ is also adjacent to $\tilde{b}$.
    
    Now, obtain a path $P_1$ from $P$ by replacing the vertex $b'_{i+1}$ by $\tilde{b}$. We call this a \emph{vertex replacement operation} and denote it as $R(b'_{i+1}, \tilde{b})$. Observe that any vertex in $G - V(P)$ which has $b'_{i+1}$ as the only neighbor in $P$ is either in between $a'_i$ and $a'_{i+1}$, or in $A_1$. Since the vertex $\tilde{b}$ in $P_1$ is adjacent to both $a'_i$ and $a'_{n_1}$, by Observation~\ref{obs:BiconvexConsecutiveProperty}, $\tilde{b}$ is also adjacent to every vertex in between $a'_i$ and $a'_{n_1}$. This also includes all the vertices in $A_1$ and all the vertices in between $a'_i$ and $a'_{i+1}$. Further, every vertex in $B$ is either in $P_1$ or is adjacent to some vertex in $P_1$, since $V(P_1) \cap A = V(P) \cap A$. Therefore, we have that every vertex of $G$ which is not in $P_1$ is adjacent to a vertex of $P_1$, implying that $G$ has a spanning caterpillar with $P_1$ being the residual path.
    
    \begin{subsubcase}
        Both $A_0$ and $A_1$ are non-empty.
    \end{subsubcase}

    Recall that $A_0$ is the set of all vertices in $A$ that appear before $a_f$ in the ordering $<_A$ and $A_1$ is the set of all vertices in $A$ that appear after $a_l$ in the ordering $<_A$. Let $G_0 = G - A_1$ and let $G_1 = G - A_0$. Notice that in graph $G_0$, we have $A_1 = \emptyset$ and in graph $G_1$, we have $A_0 = \emptyset$. Observe that in order to obtain $G_0$ and $G_1$ from $G$, we are deleting some consecutive vertices with respect to $<_A$. Hence, one can see that the graphs $G_0$ and $G_1$ remain biconvex bipartite. From the choice of $A_0$ and $A_1$, it is clear that the graphs $G_0$ and $G_1$ are connected. Therefore, by using the arguments in Case~\ref{case:OneAmongA0A1IsEmpty} individually, we obtain paths $P_0$ in $G_0$ and $P_1$ in $G_1$ of the corresponding caterpillars. Let $R(x_0, y_0)$ be the vertex replacement operation (refer Case~\ref{case:OneAmongA0A1IsEmpty} for the definition) performed to obtain $P_0$ from $P$ and let $R(x_1, y_1)$ be the vertex replacement operation performed to obtain $P_1$ from $P$. By the steps in Case~\ref{case:OneAmongA0A1IsEmpty}, we clearly have that $y_0$ is the first neighbor of $a_1$, $y_1$ is the last neighbor of $a_{n_1}$, $x_0 <_B y_0$, and $y_1 <_B x_1$.
    
    Now, we obtain a new path $\Tilde{P}$ by performing both the replacements $R(x_0, y_0)$ and let $R(x_1, y_1)$ in the path $P$. Observe that if $y_0 <_B y_1$, then the replacements $R_0$ and $R_1$ are independent of each other. In that case, one can see that every vertex in $G - V(\Tilde{P})$ is adjacent to a vertex of $\Tilde{P}$. This will in turn imply that $G$ has a spanning caterpillar with $\Tilde{P}$ being the residual path. So, the rest of the proof boils down to proving that $y_0 <_B y_1$.

    \begin{claim}
        The vertex $y_0$ appears before the vertex $y_1$ in the ordering $<_B$.
    \end{claim}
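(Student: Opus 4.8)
The plan is to prove this claim directly from the defining property of a biconvex $S$-ordering, with essentially no reference to the path $P$, the sets $A_0, A_1$, or the subgraphs $G_0, G_1$. The only ingredients I would use are: $y_0$ is the first neighbour of $a_1$ and $y_1$ is the last neighbour of $a_{n_1}$ with respect to $<_B$ (both are well defined, since $G$ is connected with $n \ge 7$, so neither $a_1$ nor $a_{n_1}$ is isolated); $a_1 <_A a_{n_1}$, because $n_1 \ge 2$ and $a_1, a_{n_1}$ are the first and last elements of $<_A$; and $N(a_1) \cap N(a_{n_1}) = \emptyset$, which is the standing hypothesis of the current sub-case.

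I would argue by contradiction. Since $y_0 \in N(a_1)$, $y_1 \in N(a_{n_1})$, and $N(a_1) \cap N(a_{n_1}) = \emptyset$, we have $y_0 \ne y_1$; hence, if the claim fails, then $y_1 <_B y_0$, and I assume this. Consider the two edges $a_1 y_0$ and $a_{n_1} y_1$, which exist by the definitions of $y_0$ and $y_1$. Since $a_1 <_A a_{n_1}$ while $y_1 <_B y_0$, these two edges form a pair of cross edges, so the $S$-ordering property guarantees that at least one of $a_1 y_1$ and $a_{n_1} y_0$ is also an edge of $G$. However, $y_0$ is the first neighbour of $a_1$ in $<_B$ and $y_1 <_B y_0$, so $y_1 \notin N(a_1)$; and $y_1$ is the last neighbour of $a_{n_1}$ in $<_B$ and $y_0 >_B y_1$, so $y_0 \notin N(a_{n_1})$. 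Thus neither $a_1 y_1$ nor $a_{n_1} y_0$ is an edge of $G$, a contradiction. Therefore $y_1 <_B y_0$ is impossible, i.e.\ $y_0 <_B y_1$, as claimed.

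The main (and essentially only) point to get right is recognising that the pair $a_1 y_0$, $a_{n_1} y_1$ is a cross pair to which the $S$-ordering axiom applies; once that is noticed, the extremality of $y_0$ and $y_1$ immediately rules out both candidate chords and the argument closes in two lines. I do not expect any genuine obstacle here: the elaborate construction of $P$, $P_0$, $P_1$ and the replacement operations used in the earlier cases play no role in this claim, which depends only on the two extremal facts about $y_0$ and $y_1$ together with the disjointness $N(a_1) \cap N(a_{n_1}) = \emptyset$ and the fact that $\langle <_A, <_B\rangle$ is an $S$-ordering.
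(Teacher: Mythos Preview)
Your proof is correct and follows essentially the same approach as the paper: assume $y_1 <_B y_0$, note that $a_1 y_0$ and $a_{n_1} y_1$ are cross edges, and apply the $S$-ordering property to obtain one of $a_1 y_1$ or $a_{n_1} y_0$. The only cosmetic difference is in the final contradiction: you rule out these edges via the extremality of $y_0$ and $y_1$ (first/last neighbour), whereas the paper observes that either edge would make $y_0$ or $y_1$ a common neighbour of $a_1$ and $a_{n_1}$, contradicting $N(a_1) \cap N(a_{n_1}) = \emptyset$ directly.
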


    \begin{proof}
        Since $N(a_1) \cap N(a_{n_1}) = \emptyset$, we have $y_0 \neq y_1$. By way of contradiction, assume that $y_1 <_B y_0$. Then since $a_1 <_A a_{n_1}$ and $y_1 <_B y_0$, we have a pair of cross edges $a_1y_0$ and $a_{n_1}y_1$. Since the ordering considered for $G$ is an $S$-ordering, we have $a_1y_1 \in E(G)$ or $a_{n_1}y_0 \in E(G)$. In the former case, $y_1$ is a common neighbor of $a_1$ and $a_{n_1}$, and in the latter case, $y_0$ is a common neighbor of $a_1$ and $a_{n_1}$. In either case, we have a contradiction to the fact that $N(a_1) \cap N(a_{n_1}) = \emptyset$. Hence, the claim.
    \end{proof}
    \renewcommand\qedsymbol{$\blacksquare$}

    This also completes the proof of Theorem~\ref{thm:BiconBipCaterpillar}.
\end{proof}

\section{Impact on Graph Burning}
Apart from the structural significance, Theorem~\ref{thm:BiconBipCaterpillar} would be useful in several aspects. Here, we provide an implication of the result on a well-known conjecture, called the \emph{burning number conjecture}.

For a graph $G$, the \emph{burning number}, $b(G)$, is the minimum number of iterations required to inflame (or burn) the whole graph while in each iteration the fire spreads from all burned vertices to their neighbors and one additional vertex can be burned. A sequence of vertices $B = (b_1, b_2, \dots , b_k)$ is said to be a \emph{burning sequence} of $G$ if the whole graph can be burned in $k$ steps by burning the vertices in $B$ sequentially.

The concept of burning number was coined by \citet{Bonato2016Burning}. They conjectured as follows:

\begin{conjecture}[\cite{Bonato2016Burning}]\label{conj:BurningNumber}
    For any connected graph $G$ of order $n$, $b(G) \le \lceil \sqrt{n} \rceil$.
\end{conjecture}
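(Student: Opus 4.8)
The final statement is the \emph{burning number conjecture} of Bonato, Janssen, and Roshanbin, one of the central open problems in the theory of graph burning; I should say at the outset that no complete proof is known for arbitrary connected graphs, so what follows is the natural line of attack together with an honest account of where it stalls. The starting point is the standard reformulation of the burning number as a covering problem. Writing $B_r(x)$ for the set of vertices at distance at most $r$ from $x$, one has $b(G) \le k$ if and only if there exist (not necessarily distinct) vertices $x_1, \dots, x_k$ with
\begin{equation*}
    \bigcup_{i=1}^{k} B_{k-i}(x_i) = V(G),
\end{equation*}
that is, the graph can be covered by $k$ balls of radii $k-1, k-2, \dots, 1, 0$. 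On a path, a ball of radius $r$ covers exactly $2r+1$ vertices, so the total capacity of these $k$ balls is $\sum_{r=0}^{k-1}(2r+1) = k^2$; taking $k = \lceil \sqrt{n} \rceil$ gives $k^2 \ge n$, which is precisely why a path on $n$ vertices can be burned in $\lceil \sqrt{n} \rceil$ steps. The conjecture asserts that the path is the extremal case: every connected graph of order $n$ is at least as easy to burn.

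The plan is therefore to produce, for $k = \lceil \sqrt{n} \rceil$, a sequence of fire sources whose nested balls cover $V(G)$. First I would reduce to trees: burning can only be faster when more edges are present, so $b(G) \le b(T)$ for any spanning tree $T$ of $G$, and it suffices to bound $b(T)$. On a tree I would attempt a longest-path (heavy-path) decomposition, repeatedly extracting a longest path, burning it with a block of consecutive radii as in the path count, and recursing on the subtrees that hang off it using the remaining, smaller radii. This is exactly the mechanism behind the resolved special cases: if $G$ admits a spanning caterpillar, then its spine is a path and every off-spine vertex lies within distance one of the spine, so a radius-$r$ ball centred on the spine covers a whole path-segment together with its pendant leaves, and the path count transfers almost verbatim. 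Theorem~\ref{thm:BiconBipCaterpillar} supplies precisely such a spanning caterpillar for connected biconvex bipartite graphs, which is how the conjecture is settled for that class.

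The hard part — and the reason the conjecture remains open in full — is that a general connected graph need not possess any path-like spanning structure on which the $2r+1$ per-ball count is tight. The extremal difficulty lives in tree instances with many long branches (spider- and broom-like trees), where one must schedule the decreasing radii $k-1, \dots, 0$ among the sources so that each ball simultaneously reaches the far ends of several branches without wasteful overlap; controlling this packing against the $k^2 \ge n$ budget, with no slack, is exactly what resists a clean argument. Consequently I do not expect to establish the bound $\lceil \sqrt{n} \rceil$ for all connected graphs: the best one can currently hope to prove unconditionally is $b(G) \le (1 + o(1))\sqrt{n}$, and the exact ceiling is confirmed only for structured classes such as those admitting spanning caterpillars. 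Closing the gap between $(1+o(1))\sqrt{n}$ and $\lceil \sqrt{n}\rceil$ in the absence of such structure is the genuine obstacle.
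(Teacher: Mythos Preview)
You have correctly identified the situation: the statement is the burning number conjecture, and the paper does \emph{not} prove it. It is recorded there only as Conjecture~\ref{conj:BurningNumber}, an open problem attributed to Bonato, Janssen, and Roshanbin; the paper's contribution is to verify it for the restricted class of connected biconvex bipartite graphs by combining Theorem~\ref{thm:BiconBipCaterpillar} (existence of a spanning caterpillar) with the known caterpillar bound of Hiller (Theorem~\ref{thm:BurningBoundCaterpillar}). Your write-up already describes this mechanism accurately.

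There is therefore no ``paper's own proof'' to compare against. Your proposal is not a proof of the conjecture and does not claim to be one; it is an honest sketch of the standard ball-covering reformulation, the reduction to trees, and the genuine obstruction (scheduling the decreasing radii on trees with many long branches). That assessment is in line with the current state of the literature. The only thing to flag is that this should not be presented as a proof attempt at all: the right response to being asked to prove Conjecture~\ref{conj:BurningNumber} in full is exactly what you did---explain why it is open and point to the special case the paper actually settles.
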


This is called the \emph{burning number conjecture} in the literature and is widely worked upon from thereon. \citet{Hiller2021BurningCaterpillar} proved the conjecture for caterpillars.

\begin{theorem}[\cite{Hiller2021BurningCaterpillar}]\label{thm:BurningBoundCaterpillar}
    If $G$ is a caterpillar of order $n$, then $b(G) \le \lceil \sqrt{n} \rceil$.
\end{theorem}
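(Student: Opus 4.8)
The plan is to recast the bound as a ball-covering statement and reduce it, as far as possible, to the already understood case of paths. Recall that for a connected graph $G$ one has $b(G)\le k$ exactly when there are vertices $x_1,\dots,x_k$ with $\bigcup_{i=1}^{k}B_{k-i}(x_i)=V(G)$, where $B_r(x)$ denotes the set of vertices within distance $r$ of $x$ (the source chosen in round $i$ has $k-i$ further rounds in which its fire spreads). For a path this is classical: with $P_n=u_1\cdots u_n$ and $k=\lceil\sqrt{n}\rceil$ we have $\sum_{i=1}^{k}\bigl(2(k-i)+1\bigr)=k^2\ge n$, so $\{1,\dots,n\}$ splits into $k$ consecutive blocks of sizes $2k-1,2k-3,\dots,1$, and placing $x_i$ at the centre of the $i$-th block burns exactly that block, whence $b(P_n)\le\lceil\sqrt{n}\rceil$. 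The cases $n\le 6$ are immediate, since every connected graph on at most four vertices has burning number at most $2$ and every caterpillar on five or six vertices has burning number at most $3$; so assume $n\ge 7$.

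Let $C$ be a caterpillar on $n$ vertices with residual path $v_1\cdots v_m$, let $\ell_j$ be the number of pendant leaves at $v_j$, so $n=m+\sum_j\ell_j$, and put $k=\lceil\sqrt{n}\rceil$, so $k^2\ge n$. The residual path is isometric in $C$, so for a source $x_i=v_j$ with radius $r=k-i\ge 1$ the ball $B_r(v_j)$ equals the segment $v_{j-r},\dots,v_{j+r}$ of the residual path together with all leaves of $v_{j-r+1},\dots,v_{j+r-1}$: it burns every leaf of a residual-path vertex lying \emph{strictly inside} that segment, but not the leaves of the two extreme vertices of the segment. The endpoints $v_1$ and $v_m$ always carry a leaf (otherwise they would themselves be leaves of $C$ and would not belong to the residual path), and catching those leaves needs a ball reaching one step beyond $v_1$ (resp.\ $v_m$); since nothing lies beyond $v_1$ or $v_m$, this is as though the residual path had two extra ``phantom'' endpoints $v_0$ and $v_{m+1}$. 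More generally, attach to each interior residual-path vertex $v_j$ with $\ell_j\ge 1$ a phantom vertex $\tilde v_j$, producing an auxiliary path $P^{\ast}$ on $m+2+t'$ vertices, where $t'$ is the number of leaf-bearing interior vertices. Since each such vertex, and each of $v_1,v_m$, carries at least one leaf, $n\ge m+2+t'$, hence $k^2\ge|V(P^{\ast})|$, so by the path case $P^{\ast}$ is covered by $k$ blocks of sizes $2k-1,\dots,1$. Transporting this covering back to $C$ — placing each source at the real residual-path vertex matching the centre of its block, or at a neighbouring real vertex when that centre is a phantom — should burn every residual-path vertex and leave every leaf-bearing vertex strictly interior to some ball, so that all leaves burn as well, yielding a burning sequence of length $k$.

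The delicate point, which I expect to be the main obstacle, is exactly this transport step. Covering $P^{\ast}$ forces each phantom $\tilde v_j$ within radius of some source, but whether this certifies that $v_j$ is \emph{strictly} interior to the corresponding ball of $C$ depends on which side of $v_j$ the source lies: one phantom settles one side cleanly, while if $v_j$ carries only a single leaf the other side needs a separate argument (a second phantom would close it but is affordable only when $\ell_j\ge 2$). Worse, when several leaf-bearing vertices are clustered, the blocks that must contain them in their interiors have to span the whole cluster, which couples the placement of several blocks of different radii and rules out a purely greedy left-to-right assignment. I would resolve this with a careful left-to-right sweep that always uses the largest remaining radius, positions each block so that its right boundary falls on a leaf-free residual-path vertex whenever one is available, and charges every residual-path vertex ``wasted'' at a block boundary against a distinct leaf, so that the total wasted length never exceeds the available slack $k^2-(m+2+t')\ge 0$; the handful of degenerate shapes — a very short residual path, or a single residual-path vertex carrying almost all the leaves, where one instead simply centres a radius-$(k-1)$ ball on that vertex — are then checked directly. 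Verifying that this sweep always reaches $v_{m+1}$ within $k$ blocks, and that the degenerate cases close, is where the genuine work lies.
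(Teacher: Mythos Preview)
The paper does not prove this theorem at all: it is quoted verbatim from \cite{Hiller2021BurningCaterpillar} and used as a black box to derive Corollary~\ref{cor:BiconBipBound}. There is therefore no ``paper's own proof'' to compare your proposal against.

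Assessed on its own, your proposal is a plan rather than a proof, and you say so yourself: the transport of the $P^{\ast}$-covering back to $C$, the left-to-right sweep that is supposed to place block boundaries on leaf-free residual-path vertices, and the degenerate cases are all deferred with phrases like ``should burn every residual-path vertex'' and ``is where the genuine work lies.'' That is precisely the part that carries the content of the theorem; the reduction to paths and the counting inequality $k^{2}\ge m+2+t'$ are the easy observations.

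There is also a structural ambiguity in the phantom construction that would have to be fixed before the sketch could be completed. You say you ``attach to each interior residual-path vertex $v_j$ with $\ell_j\ge 1$ a phantom vertex $\tilde v_j$, producing an auxiliary path $P^{\ast}$,'' but attaching a pendant to an interior vertex of a path does not yield a path. Presumably you mean to \emph{insert} $\tilde v_j$ into the path adjacent to $v_j$, but then you must decide on which side, and (as you note) a single phantom only forces $v_j$ to be strictly interior to a ball approaching from one direction. When the covering of $P^{\ast}$ happens to place $v_j$ at a block boundary on the other side, the leaves of $v_j$ are not burned, and your charging argument (``every wasted vertex is charged against a distinct leaf'') is asserted but not carried out. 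Until that sweep is actually executed and shown to terminate within $k$ blocks while leaving every leaf-bearing vertex strictly interior to some ball, the argument is incomplete.
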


It is clear that burning a spanning tree of a graph is sufficient to burn the entire graph. Therefore, we have the following corollary of Theorem~\ref{thm:BiconBipCaterpillar} and Theorem~\ref{thm:BurningBoundCaterpillar}.

\begin{corollary}\label{cor:BiconBipBound}
    If $G$ is a connected biconvex bipartite graph of order $n$, then $b(G) \le \lceil \sqrt{n} \rceil$.
\end{corollary}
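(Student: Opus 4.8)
The plan is to combine the two results already at hand — Theorem~\ref{thm:BiconBipCaterpillar} and Theorem~\ref{thm:BurningBoundCaterpillar} — through the elementary principle that the burning number cannot increase when we pass to a spanning subgraph. First I would invoke Theorem~\ref{thm:BiconBipCaterpillar} to fix a spanning caterpillar $T$ of $G$. Since $T$ spans $G$, its order is exactly $n$, and being a caterpillar it satisfies the hypothesis of Theorem~\ref{thm:BurningBoundCaterpillar}, which immediately gives $b(T) \le \lceil \sqrt{n} \rceil$.

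Second, I would establish the key inequality $b(G) \le b(T)$. The idea is that any burning sequence $(w_1, w_2, \dots, w_k)$ that burns $T$ in $k$ steps also burns $G$ in at most $k$ steps, because $E(T) \subseteq E(G)$ means the fire can only spread faster in $G$. I would make this precise by induction on the step index $t$: writing $S_t^T$ and $S_t^G$ for the sets of burned vertices after step $t$ when the same sequence is played on $T$ and on $G$, respectively, I would show $S_t^T \subseteq S_t^G$ for every $t$. The base case is trivial, and the inductive step uses that the spreading rule on $G$ propagates the fire along every edge of $T$ plus possibly additional edges, while the extra vertex burned at round $t+1$ is the same $w_{t+1}$ in both graphs. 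Hence if $S_k^T = V(T) = V(G)$, then $S_k^G = V(G)$ as well, so $(w_1, \dots, w_k)$ is a burning sequence of $G$ and $b(G) \le b(T)$.

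Finally, chaining the two bounds yields $b(G) \le b(T) \le \lceil \sqrt{n} \rceil$, which is exactly the claim. The only step needing any care is the monotonicity $b(G) \le b(T)$, and even there the difficulty is purely bookkeeping: one must track that adding the edges of $G$ not present in $T$ can only enlarge the burned set at each round and never delays the burning of a vertex. Conceptually this is immediate, since a larger edge set can only accelerate the spread of fire; everything else is a direct application of the two cited theorems.
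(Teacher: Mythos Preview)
Your argument is correct and matches the paper's approach exactly: it derives the corollary by combining Theorem~\ref{thm:BiconBipCaterpillar} with Theorem~\ref{thm:BurningBoundCaterpillar} via the observation that burning a spanning subgraph suffices to burn the whole graph. In fact, you spell out the monotonicity step $b(G)\le b(T)$ more carefully than the paper, which simply states that burning a spanning tree of a graph is sufficient to burn the entire graph.
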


Hence, we can conclude that Conjecture~\ref{conj:BurningNumber} is true for the class of biconvex bipartite graphs. Consequently, the conjecture is true for any subclass of biconvex bipartite graphs. In particular, we can also conclude that Conjecture~\ref{conj:BurningNumber} is true for the bipartite permutation graphs and the chain graphs.

\section{Conclusion}
In this paper, we proved the existence of a spanning caterpillar in connected biconvex bipartite graphs. This structural existence is useful in studying several problems on biconvex bipartite graphs. One such instance is provided in the paper by applying this result in proving the burning number conjecture for biconvex bipartite graphs. It would be interesting to see whether some other problems can be addressed by using this result.

\bibliography{main}
\bibliographystyle{SK}

\end{document}